\DeclareMathOperator{\Rep}{{\rm Rep}}
\DeclareMathOperator{\R}{\mathbb{R}}
\newtheorem{theorem}{Theorem}[section]
\newtheorem{lemma}[theorem]{Lemma}
\theoremstyle{definition}
\newtheorem{remark}[theorem]{Remark}
\title{Complex spherical codes with two inner products
}
\author{Hiroshi Nozaki\thanks{Department of Mathematics Education, Aichi University of Education, Kariya, 448-8542, Japan. \emph{E-mail address}: \texttt{hnozaki@auecc.aichi-edu.ac.jp}}, Sho Suda\thanks{Department of Mathematics Education, Aichi University of Education, Kariya, 448-8542, Japan. \emph{E-mail address}: \texttt{suda@auecc.aichi-edu.ac.jp}}}
\begin{document}
\maketitle

\renewcommand{\thefootnote}{\fnsymbol{footnote}}
\footnote[0]{2010 Mathematics Subject Classification: 
05C62 
(05B20,05B30).
}

\begin{abstract}
A finite set $X$ in a complex sphere is called a complex spherical $2$-code if
the number of inner products between two distinct vectors in $X$ is equal to $2$. 
In this paper, we characterize the tight complex spherical $2$-codes 
by doubly regular tournaments or skew Hadamard matrices. 
We also give certain maximal 2-codes relating to skew-symmetric $D$-optimal designs. 
To prove them, we show the smallest embedding dimension of a tournament into a complex sphere by the multiplicity of the smallest or second-smallest eigenvalue of the Seidel matrix. 
\end{abstract}
\textbf{Key words}: 
complex spherical $s$-code, 
doubly regular tournament, 
skew Hadamard matrix,  
skew-symmetric $D$-optimal design, 
representable graph, 
main angle, main eigenvalue, 
graph spectrum.
\section{Introduction}
Let $X$ be a finite set of points on the  complex unit
sphere $\Omega(d)$ in $\mathbb{C}^d$. The {\it angle set} $A(X)$
is defined to be 
\[
A(X)=\{x^*y \mid  x,y \in X, x \ne y \}, 
\]
where $x^*$ is the transpose conjugate of a column vector $x$. 
A finite set $X$ is called a {\it complex spherical $s$-code} 
if $|A(X)|=s$ and $A(X)$ contains an imaginary number. The value $s$ is called the {\it degree} of $X$.  
For $X, X' \subset \Omega(d)$, we say that $X$ is  
{\it isomorphic} to $X'$ if there exists a unitary transformation
from $X$ to $X'$. 
An $s$-code $X\subset \Omega(d)$ is said to be {\it largest} if $X$ has 
the largest possible cardinality in all $s$-codes in $\Omega(d)$.  
One of major problems on $s$-codes 
is to classify largest $s$-codes for given $s$ and $d$. 

We will survey Euclidean finite sets with only $s$ distances. 
For $X \subset \mathbb{R}^d$, we define
\[
D(X)=\{d(x,y) \mid x,y \in X, x\ne y\},
\] 
where $d(x,y)$ is the Euclidean distance of $x$ and $y$. A finite set $X$ is called an {\it $s$-distance set} if $|D(X)|=s$ holds. 
We have an upper bound for the size of an $s$-distance set in $\R^d$, namely $|X| \leq \binom{d+s}{s}$ \cite{BBS}. Clearly
the largest $1$-distance set in $\R^d$ is the regular simplex for any $d$. 
Largest $2$-distance sets in $\R^d$ are classified for $d\leq 7$ \cite{ES66,L97}. 
Largest $s$-distance sets in $\R^2$ are classified for $s\leq 5$ \cite{EF96,S04,S08}.  
The largest $3$-distance set in $\R^3$ is the vertex set of the icosahedron \cite{Spre}. 
The classification of largest $s$-distance sets is still open for others $(s,d)$. 
A largest $2$-distance set in $\R^8$ is given in \cite{L97}, and it attains the upper bound. 

A spherical $s$-distance set particularly deserves attention because of the connection to association schemes or  
spherical $t$-designs (see \cite{DGS77, BB} for details). 
A subset $X$ of $S^{d-1}$ is called 
a {\it spherical $t$-design} if for any polynomial $f$ in $d$ variables of degree at most $t$, the following equality 
holds: 
\[
\frac{1}{|S^{d-1}|}\int_{S^{d-1}} f(x) dx= \frac{1}{|X|} \sum_{x \in X} f(x),
\]
where $|S^{d-1}|$ is the volume of $S^{d-1}$.   
 If a spherical $t$-design $X$ of degree $s$ satisfies  
$t\geq 2s-2$, then $X$ has the structure of a $Q$-polynomial association scheme \cite{DGS77}. The size of an $s$-distance set in
$S^{d-1}$ is smaller than or equal to $\binom{d+s-1}{s}+\binom{d+s-2}{s-1}$ \cite{DGS77}. An $s$-distance set $X$ is said to be {\it tight} if $X$ attains this bound. A tight $s$-distance set becomes a minimal spherical $t$-design and satisfies $t=2s$ \cite{DGS77}. The classification of tight $s$-distance sets is one of the most interesting problems, and this has been solved except for $s=2$ \cite{BD79}. 
A largest $2$-distance set on $S^{d-1}$ is determined for 
$d\leq 93$ ($d\ne 46,78$) \cite{M09,AW13}. 
A largest $3$-distance set on $S^{d-1}$ is determined for $d=2,3,8,22$ \cite{Spre,MN11}.

A simple graph $G=(V,E)$ is {\it representable} in $\R^d$ if  
there is an embedding $\sigma:V\rightarrow \R^d$   such that 
\[
d(\sigma(a),\sigma(b))=\begin{cases}
\alpha \text{ if $(a,b) \in E$}, \\
\beta \text{ otherwise},
\end{cases}
\]
for some $\alpha,\beta\in\R$.
For a simple graph $G$, Roy \cite{R10} gave an explicit expression of the minimal dimension $d$ such that 
$G$ is representable in $\R^d$ in terms of the multiplicity of the smallest or second-smallest eigenvalue of $A$.    This embedding of a graph is useful for the classification of $2$-distance sets \cite{ES66,L97}. 
 
Roy and Suda \cite{RSX} gave the complex analogue of the spherical $s$-distance set theory. 
Complex spherical $s$-codes are closely related to complex spherical designs or non-symmetric association schemes. In this paper, we consider a complex spherical $2$-code $X\subset \Omega(d)$. If $X$ satisfies $A(X) \subset \R$, then the Gram matrix of $X$ is real, and $X$ can be embedded into $\R^d$. 
We may assume $A(X)$ contains an imaginary number $\alpha$, and $A(X)=\{\alpha, \overline{\alpha}\}$, where $\overline{\alpha}$ is the conjugate of $\alpha$. We have a natural upper bound \cite{RSX}: 
\begin{equation} \label{eq:2_bound}
|X| \leq \begin{cases}
2d+1 & \text{if $d$ is odd},\\
2d & \text{if $d$ is even}. 
\end{cases}
\end{equation}
A $2$-code $X$ is said to be {\it tight} if $X$ attains the bound \eqref{eq:2_bound}. 
This is known as the {\em absolute bound}.

A {\it tournament} is a directed graph obtained by assigning a direction for each edge in an undirected complete graph.
Formally, a tournament is a pair $(V,E)$ such that  the vertex set $V$ is a finite set and the edge set $E\subset V\times V$ satisfies $E\cap E^T=\emptyset$ and $E\cup E^T\cup \{(x,x)\mid x\in V\}=V\times V$, where $E^T:=\{(x,y)\mid (y,x)\in E\}$. 
A complex spherical $2$-code $X$ has the structure of a tournament $(X,E)$, where $E=\{(x,y)\in X\times X \mid x^*y=\alpha \}$. A tournament $(V,E)$ is {\it representable in $\Omega(d)$} 
if there exists 
a mapping $\varphi$ from $V$ to $\Omega(d)$ 
such that for all distinct $x,y \in V$, 
\[
\varphi(x)^* \varphi(y)=\begin{cases}
\alpha \text{ if $(x,y) \in E$}, \\
\overline{\alpha} \text{ if $(y,x) \in E$},
\end{cases}
\]
where
$\alpha$ is an imaginary number with ${\rm Im}(\alpha)>0$.  
Such a mapping $\varphi$ is said to be a {\it representation} of a tournament. 
We identify a representation with the image of the representation. 
Two tournaments $G=(V,E),G'=(V',E')$ are {\it isomorphic} if there is a bijection from $V$ to $V'$ such that $(x,y)\in E$ if and only if $(f(x),f(y))\in E'$.
For two tournaments $G$ and $G'$, if $G$ is not isomorphic to $G'$, then a representation of $G$ is not isomorphic to that of $G'$. 
Let $\Rep(G)$ denote the smallest $d$ such that $G$ is representable in $\Omega(d)$.
The {\it Seidel matrix} of $G$ is defined to be $
\sqrt{-1}(A-A^T)$, where $A$ is the adjacency matrix of $G$. 
In Section~\ref{sec:Crep}, we determine $\Rep(G)$ by the multiplicity of the smallest or second-smallest eigenvalue of 
the Seidel matrix of $G$. 

A tournament $G$ is said to be {\it doubly regular} if  the number of the neighbors of a vertex does not depend on the choice of the vertex and the number of the common neighbors of a pair of distinct vertices does not depend on the choice of the pair.
An $n\times n$ $(\pm1)$-matrix of $H$ is called a {\it skew Hadamard matrix} if $H+H^T=2I$ and $H H^T=nI$, where $I$ is the identity matrix.   
Let $X \subset \Omega(d)$ be a $2$-code, and $A$  the adjacency matrix of the tournament obtained from $X$. 
It is known that the existence of a doubly regular tournament of $4d+3$ vertices is equivalent to that of a skew Hadamard matrix of order $4d+4$ \cite{RB72}. 
In Section~\ref{sec:tight}, 
we give the following characterizations of tight $2$-codes and $2$-codes with $n=2d$ where $d$ is odd.   
\begin{enumerate}
\item For odd $d$, $X$ is a tight complex $2$-code if and only if $A$ is the adjacency matrix of a doubly regular tournament.
\item For even $d$, $X$ is a tight complex $2$-code if and only if $I+A-A^T$ is a skew Hadamard matrix. 
\item For odd $d$, $X$ is a complex $2$-code with $n=2d$ if and only if either $A$ is the adjacency matrix of an induced subgraph of a  doubly regular tournament by deleting a vertex, or its Seidel matrix $S$ satisfies that $S^2$ is permutationally similar to 
\begin{align*}
\begin{pmatrix}
k I+l J&0\\0&k I+l J
\end{pmatrix}, 
\end{align*}
for some positive integers $k,l$. \label{en:3}
\end{enumerate} 
We note that the last case in \eqref{en:3} includes skew-symmetric $D$-optimal designs \cite{E60,W}.
The table of the number of non-isomorphic tight $2$-codes in $\Omega(d)$ for $d\leq 14$ is obtained by a computer calculation based on Theorem~3.2 in \cite{BC00}.

\section{Results on main eigenvalues}
In this section we give results on main eigenvalues of a Hermitian matrix which 
will be used later. 
Let $H$ be a Hermitian matrix of size $n$ with $s$ distinct eigenvalues $\tau_1<\cdots< \tau_s$.  
Let $E_i$ be the orthogonal projection matrix onto the eigenspace corresponding to $\tau_i$. 
The {\it main angle} $\beta_i$ of $\tau_i$ is defined to be the value 
\[
\beta_i= 
\frac{1}{\sqrt{n}} \sqrt{(E_i \cdot j) ^\ast (E_i \cdot j)},
\]
where 
$j$ is the all-ones vector. 
It is clear that $0\leq \beta_i \leq 1$ and 
$\sum_{i=1}^s \beta_i^2=1$. 

Let $J$ denote the all-ones matrix. 
\begin{lemma}[\cite{NSudapre}]\label{lem:char}
Let $H$ be a Hermitian matrix of size $n$ with $s$ distinct eigenvalues $\tau_1<\cdots <\tau_s$. Let $\beta_i$ be the main angle of $\tau_i$.  
Let $M=H+aJ$, where $a$ is a 
complex
number. 
Then
\[
P_M(x)=P_H(x)\big( 1+a \sum_{i=1}^s  \frac{n \beta_i^2}{\tau_i-x} \big), 
\]
where $P_M$ is the characteristic polynomial of matrix $M$. 
\end{lemma}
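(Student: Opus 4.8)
The plan is to compute $\det(M - xI) = \det(H - xI + aJ)$ by writing $J = jj^*$ (since $j$ is real, $j^* = j^T$) and applying the matrix-determinant lemma, then expressing the resulting scalar correction in the eigenbasis of $H$. Concretely, for $x$ not an eigenvalue of $H$, the matrix $H - xI$ is invertible, so
\[
\det(H - xI + a\,j j^*) = \det(H - xI)\,\bigl(1 + a\, j^*(H - xI)^{-1} j\bigr).
\]
Thus $P_M(x) = P_H(x)\bigl(1 + a\, j^*(H-xI)^{-1}j\bigr)$, and it remains to identify the quadratic form $j^*(H-xI)^{-1}j$.

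For the second step I would use the spectral resolution $H = \sum_{i=1}^s \tau_i E_i$ with $\sum_i E_i = I$ and $E_iE_k = \delta_{ik}E_i$. Then $(H - xI)^{-1} = \sum_{i=1}^s (\tau_i - x)^{-1} E_i$, so
\[
j^*(H - xI)^{-1} j = \sum_{i=1}^s \frac{1}{\tau_i - x}\, j^* E_i j = \sum_{i=1}^s \frac{1}{\tau_i - x}\, (E_i j)^*(E_i j),
\]
using that each $E_i$ is a Hermitian idempotent, so $j^*E_ij = j^*E_i^*E_ij = (E_ij)^*(E_ij)$. By the definition of the main angle, $(E_ij)^*(E_ij) = n\beta_i^2$, which gives exactly
\[
1 + a\sum_{i=1}^s \frac{n\beta_i^2}{\tau_i - x},
\]
and hence the claimed identity for all $x$ outside the spectrum of $H$.

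Finally, both sides of the asserted equation are, after clearing the denominators $\prod_i(\tau_i - x)$ appearing in $P_H(x)$, polynomials in $x$; since they agree at infinitely many values of $x$ (all $x \notin \{\tau_1,\dots,\tau_s\}$), they agree as polynomials, so the formula holds identically. I would remark that when $a$ is allowed to be complex, $M$ need not be Hermitian, but the determinant computation above never uses Hermiticity of $M$ — only invertibility of $H - xI$ and the rank-one structure of $aJ$ — so the argument goes through verbatim.

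The only mildly delicate point is the bookkeeping at the eigenvalues of $H$: the right-hand side, written as stated, has apparent poles at $x = \tau_i$, but these are cancelled by the zeros of $P_H(x)$, so the product is genuinely a polynomial; the cleanest way to handle this is precisely the "agree off the spectrum, hence agree everywhere" density argument just described, which sidesteps any case analysis about multiplicities or vanishing main angles. I do not anticipate a real obstacle here — the matrix-determinant lemma does essentially all the work, and the main-angle definition is tailor-made to package the quadratic form.
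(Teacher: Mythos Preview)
Your argument is correct. The paper does not actually prove this lemma --- it is quoted from \cite{NSudapre} without proof --- so there is no in-paper argument to compare against. Your route (write $J=jj^*$, apply the matrix-determinant lemma to the rank-one perturbation $H-xI+a\,jj^*$, then evaluate $j^*(H-xI)^{-1}j$ via the spectral resolution $\sum_i(\tau_i-x)^{-1}E_i$ and the identity $j^*E_ij=(E_ij)^*(E_ij)=n\beta_i^2$) is exactly the standard derivation of this formula, and your closing remark that both sides are polynomials agreeing off the spectrum of $H$ cleanly handles the apparent poles at $x=\tau_i$. The observation that Hermiticity of $M$ is never used, only that of $H$, is also on point and justifies the ``$a$ complex'' generality in the statement.
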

An eigenvalue $\tau_i$ is said to be {\it main} 
if $\beta_i\ne 0$.

\begin{theorem} \label{thm:interlace}
Let $H$ be a Hermitian matrix of size $n$, 
and $M=H+a J$, where $a$ is a real number. 
Let $\tau_{1}<\tau_{2}<\cdots<\tau_{r}$ be the distinct main
eigenvalues of $H$, and $\beta_i$ the main angle of $\tau_i$. 
Let $\mu_{1}<\mu_{2}<\cdots<\mu_{s}$ be the distinct 
main eigenvalues of $M$. 
Then $r=s$ holds, and 
\begin{equation} \label{eq:f(x)}
f(x)=\prod_{i=1}^r (\mu_{i}-x)= \prod_{i=1}^r(\tau_{i}-x) 
(1+a \sum_{j=1}^r \frac{n \beta_{j}^2}{\tau_{j}-x}). 
\end{equation} 
 Moreover, if $a>0$, then
$\tau_{1}<\mu_{1}<\tau_{2}< \cdots <\tau_{r}<\mu_{r}$, 
and if $a<0$, then
 $\mu_{1}<\tau_{1}<\mu_{2} <\cdots <\mu_{r}<\tau_{r}$.
\end{theorem}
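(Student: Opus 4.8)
The plan is to build everything on Lemma~\ref{lem:char} applied with the real scalar $a$. First I would observe that the factor
\[
g(x) = 1 + a \sum_{i=1}^s \frac{n\beta_i^2}{\tau_i - x}
\]
appearing in $P_M(x) = P_H(x)\, g(x)$ only ``sees'' the main eigenvalues of $H$: if $\beta_i = 0$ then $\tau_i$ contributes nothing to $g$. So writing $P_H(x) = (-1)^n\prod_{i}(\tau_i - x)^{m_i}$ and splitting the product into main eigenvalues $\tau_1 < \cdots < \tau_r$ (those with $\beta_i \neq 0$) and non-main ones, the non-main factors pass through $P_M$ unchanged, i.e.\ every non-main eigenvalue of $H$ is a non-main eigenvalue of $M$ with the same multiplicity. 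Clearing denominators in $g$, one gets
\[
\Big(\prod_{j=1}^r (\tau_j - x)^{m_j}\Big) g(x) = \Big(\prod_{j=1}^r (\tau_j - x)^{m_j - 1}\Big)\, \Big(\prod_{j=1}^r(\tau_j - x) + a\sum_{j=1}^r n\beta_j^2 \prod_{k\neq j}(\tau_k - x)\Big),
\]
so that $P_M(x)$ factors as $P_H(x)/\prod_j(\tau_j-x)$ times $f(x) := \prod_{j=1}^r(\tau_j - x)\,(1 + a\sum_j n\beta_j^2/(\tau_j - x))$. This immediately gives \eqref{eq:f(x)} once I identify the roots of $f$ with the main eigenvalues $\mu_1 < \cdots < \mu_s$ of $M$.

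The next step is to show that $f$ is a degree-$r$ polynomial with $r$ distinct real roots, none of which is a $\tau_j$, and that these roots are exactly the main eigenvalues of $M$; this will force $s = r$. That $\deg f = r$ is clear from the formula (the leading term is $(-x)^r$). To locate the roots I would evaluate $f$ at the $\tau_j$: $f(\tau_j) = a\, n\beta_j^2 \prod_{k\neq j}(\tau_k - \tau_j)$, which is nonzero since $\beta_j \neq 0$ and the $\tau_k$ are distinct. For $a > 0$ the signs of $f(\tau_1), f(\tau_2), \ldots, f(\tau_r)$ alternate (the sign of $\prod_{k\neq j}(\tau_k - \tau_j)$ is $(-1)^{r-j}$), and combined with $f(x) \to -\infty$ as $x \to +\infty$ and $f(x) \to (-1)^r\infty$ as $x\to -\infty$, the intermediate value theorem yields one root in each of $(\tau_1,\tau_2), \ldots, (\tau_{r-1},\tau_r)$ and one in $(\tau_r, \infty)$ — giving $r$ distinct real roots interlacing as $\tau_1 < \mu_1 < \tau_2 < \cdots < \tau_r < \mu_r$. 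The case $a < 0$ is symmetric, with the extra root falling in $(-\infty, \tau_1)$. Since $P_M = (P_H/\prod_j(\tau_j-x)) \cdot f$ and the first factor carries exactly the non-main eigenvalues of $H$ (hence of $M$), the roots of $f$ must be precisely the main eigenvalues of $M$: any $\mu$ with $\mu \neq \tau_j$ for all $j$ is a root of $P_M$ iff it is a root of $f$, and one checks via Lemma~\ref{lem:char} applied in reverse (writing $H = M + (-a)J$) that these roots are genuinely main for $M$, while the $\tau_j$ themselves are not eigenvalues of $M$ at all unless they survive with lower multiplicity — which the factorization shows they do not among the main part. Thus $\{\mu_1,\ldots,\mu_s\} = \{$roots of $f\}$ and $s = r$.

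I expect the main technical obstacle to be the bookkeeping around multiplicities and the precise claim ``the roots of $f$ are exactly the \emph{main} eigenvalues of $M$,'' rather than merely eigenvalues. The subtlety is that a root of $f$ could a priori coincide with a non-main eigenvalue of $H$ (inflating its multiplicity in $P_M$ while possibly being non-main for $M$), or a $\mu_i$ could accidentally equal some $\tau_j$. The sign/interlacing argument above rules out $\mu_i = \tau_j$ cleanly. To confirm mainness, the cleanest route is the symmetry of the construction: applying Lemma~\ref{lem:char} to $M$ with scalar $-a$ recovers $H$, and running the same denominator-clearing argument shows the main eigenvalues of $M$ are exactly the $\mu_i$ with the analogous nonvanishing-residue condition; matching the two factorizations of the shared polynomial $P_M \cdot \prod(\tau_j - x) = P_H \cdot \prod(\mu_i - x)$ (up to the non-main common factor) pins down $r = s$ and the interlacing simultaneously. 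Once this is in place the displayed identity \eqref{eq:f(x)} and both interlacing chains follow directly from the sign analysis of $f$ at the points $\tau_j$.
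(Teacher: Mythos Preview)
Your approach is the paper's: apply Lemma~\ref{lem:char}, cancel the non-main factors to isolate the degree-$r$ polynomial $f$, and locate its roots by sign-checking $f$ at each $\tau_j$ and at infinity (the paper handles $a<0$ by the same symmetry $H=M+(-a)J$ you invoke, and is in fact less explicit than you are about why the roots of $f$ are exactly the \emph{main} eigenvalues of $M$). One cleanup: the sign of $\prod_{k\neq j}(\tau_k-\tau_j)$ is $(-1)^{j-1}$, not $(-1)^{r-j}$, and your limits at $\pm\infty$ are off---since $f(x)\sim(-x)^r$ one has $f\to+\infty$ as $x\to-\infty$ and $f\to(-1)^r\infty$ as $x\to+\infty$---though your interlacing conclusion survives because the degree count already accounts for all $r$ roots.
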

\begin{proof}

By Lemma~\ref{lem:char}, we have the equality  
\begin{equation}\label{eq:thm2.2} 
\prod_{i=1}^s (\mu_{i}-x)= \prod_{i=1}^r(\tau_{i}-x) 
(1+a \sum_{j=1}^r \frac{n \beta_{j}^2}{\tau_{j}-x}). 
\end{equation} 
By comparing the degrees of the polynomials in both sides, we obtain $s=r$. 

Let $f(x)$ be the polynomial in \eqref{eq:thm2.2}. 
It is easily shown that for $a>0$,
\begin{align*}
f(\tau_{i})>0, &\text{ if $i \equiv 1 \mod 2$,} \\
f(\tau_{i})<0, &\text{ if $i \equiv 0 \mod 2$,} \\
\lim_{x \rightarrow \infty} f(x)<0, &\text{ if $r \equiv 1 \mod 2$,} \\
\lim_{x \rightarrow \infty}f(x)>0, &\text{ if $r \equiv 0 \mod 2$.} 
\end{align*}
This implies that $\tau_{1}<\mu_{1}<\tau_{2}< \cdots <\tau_{r}<\mu_{r}$. 
By the same manner for 
$H=M-aJ$ with $a<0$, 
we can show $\mu_{1}<\tau_{1}<\mu_{2} \cdots <\mu_{r}<\tau_{r}$. 
\end{proof}

\section{Representations of a tournament} \label{sec:Crep}

In this section, we determine $\Rep(G)$ by the multiplicity of the smallest or second-smallest eigenvalue of the Seidel matrix of $G$.  
Let $G=(V,E)$ be a tournament with $n$ vertices. 
The {\it adjacency matrix} $A$ of $G$ is the matrix indexed by the vertex set $V$, with entries given by 
\[
A_{xy}=
\begin{cases}
1 \text{ if $(x,y) \in E$}, \\
0 \text{ otherwise}.
\end{cases}
\]
The Gram matrix of a representation of $G$, with adjacency matrix $A$, can be expressed by
\[
 \alpha A+\overline{\alpha} A^T-\tau I,
\]
where $\alpha$ is an imaginary number, and $\tau$
is a negative real number. Note that 
$\tau$ should be the smallest eigenvalue of $\alpha A+\overline{\alpha} A^T$ to minimize the rank. 
To determine $\Rep(G)$, we will consider $\alpha$ for which 
the multiplicity of the smallest eigenvalue of $\alpha A+\overline{\alpha} A^T$ is maximum.  

\begin{theorem} \label{thm:dim_tou}
Let $G$ be a tournament with $n$ vertices, and $A$ the adjacency matrix. 
Let $\tau_1<\tau_2<\cdots<\tau_s$ be the distinct eigenvalues of 
$S=\sqrt{-1}(A-A^T)$, $\beta_i$ the main angle of $\tau_i$, and 
$m_i$ the multiplicity of $\tau_i$. 
 Let $\alpha$ be the angle with ${\rm Im}( \alpha)>0$ of the representation of $G$ in $\Omega(\Rep(G))$. 
Then the following hold.
\begin{enumerate}
\item If $\beta_1=0$, then $\Rep(G)=n-m_1-1$, and
$\alpha=
(1-c_1\sqrt{-1})/(1+c_1\tau_1)$, 
where $c_1=\sum_{i=2}^s  n\beta_i^2/(\tau_i-\tau_1)$.
\item If $\beta_1 \ne 0$, and $m_1>1$, then $\Rep(G)=n-m_1$,
and $\alpha=-\sqrt{-1}/\tau_1$. 
\item If 
$m_1=1$, $\beta_2=0$, 
and $c_2<0$,
then $\Rep(G)=n-m_2-1$, and
$\alpha=
(1-c_2\sqrt{-1})/(1+c_2\tau_2) $,  
where $c_2=n\beta_1^2/(\tau_1-\tau_2)+\sum_{i=3}^s  n\beta_i^2/(\tau_i-\tau_2)$.
\item Otherwise $\Rep(G)=n-1$. 
\end{enumerate} 
\end{theorem}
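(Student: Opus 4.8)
The plan is to convert the statement into a pure eigenvalue problem for $S$ and then read it off from Lemma~\ref{lem:char} and Theorem~\ref{thm:interlace}. First I would observe that a representation of $G$ in $\Omega(d)$ is nothing but a Hermitian positive semidefinite matrix $M$ of size $n$ with all diagonal entries $1$ and every off-diagonal entry equal to $\alpha$ or $\overline{\alpha}$ for one fixed $\alpha$ with ${\rm Im}(\alpha)>0$, and then $d={\rm rank}\,M$. Writing $\alpha=a+b\sqrt{-1}$ with $b>0$ and using the tournament identities $A+A^T=J-I$ and $\sqrt{-1}(A-A^T)=S$, such an $M$ is forced to be
\[
M \;=\; I+\alpha A+\overline{\alpha}\,A^T \;=\; bS+aJ+(1-a)I ,
\]
which indeed has the required diagonal and off-diagonal entries for every $a\in\mathbb{R}$, $b>0$. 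Since $M\succeq 0$ is equivalent to $a-1\le\lambda_{\min}(bS+aJ)$, in which case $\dim\ker M$ equals the multiplicity of $a-1$ as an eigenvalue of $bS+aJ$ (and is $0$ unless $a-1=\lambda_{\min}(bS+aJ)$), we get
\[
\Rep(G)=n-\max\{\,\dim\ker M \,:\, b>0,\ M\succeq 0\,\},
\]
so everything reduces to maximising the bottom eigenvalue multiplicity of $bS+aJ$ over admissible $(a,b)$ and identifying the maximising $\alpha$.

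Next I would describe the spectrum of $N:=bS+aJ$ via Lemma~\ref{lem:char} and Theorem~\ref{thm:interlace}: the eigenvalues are the \emph{preserved} ones $b\tau_i$, with multiplicity $m_i$ if $\beta_i=0$ and $m_i-1$ if $\beta_i\neq 0$, together with the $r$ \emph{main} eigenvalues $\mu_1<\cdots<\mu_r$ of $N$, which by Theorem~\ref{thm:interlace} strictly interlace the numbers $b\tau_{(j)}$ (the $\tau_{(j)}$ being the main eigenvalues of $S$) --- from below if $a>0$, from above if $a<0$ --- while $N=bS$ when $a=0$. I would also record $j^{*}Sj=\sum_{x\neq y}S_{xy}=0$, hence $\sum_i\tau_i\beta_i^2=0$ in addition to $\sum_i\beta_i^2=1$. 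Two structural facts then drop out: $\lambda_{\min}(N)$ is always either the smallest preserved eigenvalue or $\mu_1$; and a preserved eigenvalue $b\tau_k$ with $\beta_k=0$ gains exactly one extra unit of multiplicity precisely when some $\mu_j$ equals it, which by Lemma~\ref{lem:char} happens exactly when the factor $1+a\sum_i n\beta_i^2/(b\tau_i-x)$ vanishes at $x=b\tau_k$, i.e. when $1+(a/b)\,n\sum_{i\neq k}\beta_i^2/(\tau_i-\tau_k)=0$.

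The case analysis then follows the four cases in order. If $\beta_1=0$, the bottom of $N$ is $b\tau_1$ with multiplicity $\geq m_1$, and to gain the extra unit I would impose both $b\tau_1=a-1$ and $1+(a/b)c_1=0$; this $2\times 2$ linear system determines $(a,b)$ uniquely, yields $\alpha=(1-c_1\sqrt{-1})/(1+c_1\tau_1)$, and is admissible because $1+c_1\tau_1<0$, which follows from $(-\tau_1)c_1=n\bigl(\sum_{i\geq2}(\tau_i-\tau_1)\beta_i^2\bigr)\bigl(\sum_{i\geq2}\beta_i^2/(\tau_i-\tau_1)\bigr)\geq n$ by Cauchy--Schwarz together with $\sum_{i\geq2}(\tau_i-\tau_1)\beta_i^2=-\tau_1$; since no larger multiplicity can occur, $\Rep(G)=n-m_1-1$. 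If $\beta_1\neq 0$ and $m_1>1$, then $b\tau_1$ has preserved multiplicity only $m_1-1$ and, by the \emph{strictness} of the interlacing, no main eigenvalue can meet it unless $a=0$; at $a=0$ one has $N=bS$, the constraint $a-1=b\tau_1$ forces $b=-1/\tau_1$, i.e. $\alpha=-\sqrt{-1}/\tau_1$, larger multiplicity is impossible, and $\Rep(G)=n-m_1$. If $m_1=1$ (and $\beta_1\neq 0$), then $b\tau_1$ disappears from $N$ and $\lambda_{\min}(N)=\min(\mu_1,b\tau_2)$ for $a>0$ while $a\leq 0$ only gives a simple bottom; to beat multiplicity $1$ one needs $b\tau_2$ at the bottom, which forces $\beta_2=0$, and imposing $b\tau_2=a-1$ and $1+(a/b)c_2=0$ gives $(a,b)$ with $\alpha=(1-c_2\sqrt{-1})/(1+c_2\tau_2)$; a sign analysis shows this is admissible --- hence $\Rep(G)=n-m_2-1$ --- exactly when $c_2<0$, and otherwise the interlacing forces $\mu_1<b\tau_2=a-1$ whenever the constraint holds, so the bottom is simple and $\Rep(G)=n-1$. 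In each case the value is attained by the explicit $M$ above with the stated $\alpha$.

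The step I expect to be the main obstacle is the case $m_1=1$: with $\tau_1$ simple and main, the bottom of $N$ is a genuine tug-of-war between the mobile eigenvalue $\mu_1$ and the preserved eigenvalue $b\tau_2$, and deciding for which tournaments $b\tau_2$ can be pushed down to (and made equal to) $\mu_1$ while keeping $b>0$ requires a monotonicity/limit analysis of $\mu_1-b\tau_2$ along the line $a=b\tau_2+1$, equivalently a sign analysis of the rational function of Lemma~\ref{lem:char}; its conclusion is the condition $c_2<0$, which also carries the subsidiary check $1+c_2\tau_2>0$ (using $\beta_2=0$, $\sum_i\beta_i^2=1$, $\sum_i\tau_i\beta_i^2=0$) and the bookkeeping that every remaining configuration really collapses to a simple bottom eigenvalue. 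The only comparably delicate point is the admissibility inequality $1+c_1\tau_1<0$ in the first case, which is exactly where the linear relations $\sum_i\beta_i^2=1$ and $\sum_i\tau_i\beta_i^2=0$ enter.
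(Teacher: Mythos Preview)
Your argument is essentially the paper's: both reduce $\Rep(G)$ to maximising the multiplicity of the smallest eigenvalue of $S+aJ$ (you keep the extra scaling parameter $b$, the paper normalises $b=1$, which is cosmetic), then split into the same four cases via the interlacing of Theorem~\ref{thm:interlace}. The one substantive addition on your side is that you actually verify the admissibility of the resulting $\alpha$---the Cauchy--Schwarz step giving $1+c_1\tau_1<0$ in case~(1), and the check $1+c_2\tau_2>0$ in case~(3)---whereas the paper simply rescales the diagonal to $1$ without confirming the scaling factor is positive; your use of $\sum_i\tau_i\beta_i^2=0$ (from $j^*Sj=0$) is what makes these checks go through and is worth keeping.
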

\begin{proof}
For $\alpha'=a+\sqrt{-1}$ with $a\in\R$, we have
\[
\alpha' A+ \overline{\alpha'}A^T= aJ+\sqrt{-1} (A-A^T)-a I.
\]
The multiplicity of the smallest eigenvalue of 
$\alpha' A+ \overline{\alpha'}A^T$ is equal to that of 
$M=aJ+\sqrt{-1} (A-A^T)$. 
We would like to find $a \in \mathbb{R}$ such that 
the multiplicity of the smallest eigenvalue of $M$ is maximum. 
Let $\tau_{k_1}<\cdots < \tau_{k_r}$ be the 
distinct main eigenvalues of $S$, 
and $\mu_{l_1}<  \cdots< \mu_{l_r}$ those of $M$. Let $f(x)$ be the polynomial defined as in Theorem~\ref{thm:interlace}. 

(1) By $\beta_1=0$, we have $\tau_1<\tau_{k_1}$.
We would like to find $a \in \mathbb{R}$ such that 
$\mu_{l_1}=\tau_{1}$. For such $a$, the multiplicity of the 
smallest eigenvalue $\tau_1$ 
of $M$ is maximum, and equal to $m_1+1$. 
By Theorem~\ref{thm:interlace}, $\mu_{l_1}=\tau_1$ 
if and only if $f(\tau_1)=0$, namely, 
$a=-1/c_1$. Therefore $\Rep(G)=n-m_1-1$ for $a=-1/c_1$.
By rescaling the diagonal entries of $\alpha' A+ \overline{\alpha'}A^T-(\tau_1-a) I$ to $1$, we obtain $\alpha=
(1-c_1\sqrt{-1})/(1+c_1\tau_1)$.

(2) Since $\beta_1 \ne 0$, we have 
$\tau_1 =\tau_{k_1} \ne \mu_{l_1}$ by 
Theorem~\ref{thm:interlace}. 
Therefore, if $a\ne 0$, the multiplicity of the smallest eigenvalue of $M$ is at most $m_1-1$. 
Thus, for $a=0$, the multiplicity of the smallest eigenvalue of $M$ is maximum, and equal to $m_1$. 
Hence $\Rep(G)=n-m_1$, and $\alpha=-\sqrt{-1}/\tau_1$.

(3) By $c_2<0$, we have $\beta_1>0$ and $\tau_1$ is 
a main eigenvalue.
We would like to find $a \in \mathbb{R}$ such that 
$\mu_{l_1}=\tau_2$.
For such $a$, the multiplicity of the smallest eigenvalue 
$\tau_2$ of $M$ is maximal, and it is $m_2+1$. 
By Theorem~\ref{thm:interlace}, $\mu_{l_1}=\tau_2$ 
if and only if $f(\tau_2)=0$ and $a>0$, namely, 
$a=-1/c_2$ and $c_2<0$.
Therefore we obtain $\Rep(G)=n-m_2-1$, and
 $\alpha=
(1-c_2\sqrt{-1})/(1+c_2\tau_2)$.

(4) 
If $a=0$ and $m_1=1$, then 
the multiplicity of the smallest eigenvalue of 
$M$ is clearly $1$. 

Suppose $\beta_1 \ne 0$, $m_1=1$, $\beta_2=0$, and 
$c_2\geq 0$.  
If $a>0$ holds, then 
$\mu_{l_1}<\tau_2$ by $f(\tau_2)<0$ and $\lim_{x\rightarrow -\infty} f(x)>0$.
If $a<0$ holds, then $\mu_{l_1}<\tau_1$ by Theorem~\ref{thm:interlace}. 
The multiplicity of the smallest eigenvalue $\mu_{l_1}$ of $M$ is $1$. 

Suppose  $\beta_1 \ne 0$, $m_1=1$, $\beta_2 \ne 0$.
Then for any $a\ne 0$, the multiplicity of the smallest eigenvalue $\mu_{l_1}$ of $M$ is $1$
by Theorem~\ref{thm:interlace}.

From the above facts, $\Rep(G)=n-1$ follows. 
\end{proof} 
Note that the conditions (1)--(4) in Theorem~\ref{thm:dim_tou}  are disjoint. 
A tournament which satisfies the condition 
$(i)$ in Theorem~\ref{thm:dim_tou} is said to be of 
{\it Type~$(i)$}
for $i=1,\ldots,4$. 
There is a tournament of each type. 
Lemmas~\ref{lem:tou2}, \ref{lem:tou3}, and  Remark~\ref{rem:D} give examples of Type (1), (2), and (3), respectively.

\section{Tight complex spherical $2$-codes}\label{sec:tight}
In this section, we give bounds on complex spherical $2$-codes.
 We also characterize the tight $2$-codes and $2$-codes in $\Omega(d)$ with $n=2d$ vertices, where $d$ is odd in terms of doubly regular tournaments, skew Hadamard matrix and some skew symmetric $(0,\pm1)$-matrices including skew-symmetric $D$-optimal designs as an application of Theorem~\ref{thm:dim_tou}.

Let $X$ be a finite subset in $\Omega(d)$ of size $n$ with degree $2$, 
and let $A$ be the adjacency matrix of $X$.
Example~6.3 in \cite{RSX} shows that the following are equivalent:
\begin{enumerate}
\item $|X|=2d+1$.
\item $\{I,A,J-A-I\}$ forms the set of adjacency matrices of a non-symmetric association scheme of class $2$.    
\end{enumerate}
\begin{theorem}\label{thm:tightodd}
Let $X$ be a finite subset in $\Omega(d)$ of size $n$ with degree $2$, 
and let $A$ be the adjacency matrix of $X$.
If $d$ is odd, $|X| \leq 2d+1$ holds. Equality holds if and only if $A$ is the adjacency matrix of a doubly regular tournament. 
\end{theorem}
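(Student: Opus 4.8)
The plan is to regard $X$ as a representation of the tournament $G=(X,E)$ that it carries, so that $\Rep(G)\le d$, and then to read the extremal configurations off Theorem~\ref{thm:dim_tou}. A preliminary observation I would record is that the Seidel matrix $S=\sqrt{-1}(A-A^T)$ satisfies $\overline{S}=-S$ since $A$ is real; hence $v\mapsto\overline v$ carries the $\tau_i$-eigenspace onto the $(-\tau_i)$-eigenspace and $\overline{E_i}$ is the orthogonal projection onto the latter, so the spectrum of $S$ is symmetric about $0$ with $m_i=m_{s+1-i}$ and $\beta_i=\beta_{s+1-i}$. The inequality $|X|\le 2d+1$ is \eqref{eq:2_bound}; applied to $X$ viewed as a $2$-code in $\Omega(\Rep(G))$ it also yields $n\le 2\Rep(G)+1$, that is, $\Rep(G)\ge(n-1)/2$.

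Now suppose $|X|=2d+1$, so $n$ is odd. Then $(n-1)/2\le\Rep(G)\le d=(n-1)/2$, hence $\Rep(G)=(n-1)/2$. Since $S\ne 0$ and $\mathrm{tr}\,S=0$ we have $\tau_1<0<\tau_s$, so $m_1=m_s$ with $\tau_1\ne\tau_s$, and therefore $m_1\le(n-1)/2$. I would match $\Rep(G)=(n-1)/2$ against the four cases of Theorem~\ref{thm:dim_tou}: Type~$(4)$ would give $n=1$; Type~$(2)$ would give $m_1=(n+1)/2>(n-1)/2$; and Type~$(3)$ would give $m_2=(n-1)/2$, which together with $m_1=1$ and the spectral symmetry leaves room only for $n=3$ (a case I would verify by hand: the cube roots of unity in $\Omega(1)$ form a tight $2$-code whose tournament is the doubly regular $C_3$). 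Hence $G$ is of Type~$(1)$; then $\beta_1=0$ and $n-m_1-1=(n-1)/2$, so $m_1=m_s=(n-1)/2$.

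These two multiplicities use up all but one eigenvalue of $S$, which by symmetry is $0$; and $\mathrm{tr}(S^2)=n(n-1)$ forces $\tau_1=-\sqrt n$, so $S$ has eigenvalues $-\sqrt n,0,\sqrt n$ with multiplicities $(n-1)/2,1,(n-1)/2$. As $\beta_1=\beta_s=0$ and $\sum_i\beta_i^2=1$, the main angle of the eigenvalue $0$ equals $1$, so the all-ones vector $j$ spans the one-dimensional kernel of $S$; thus $Sj=0$ and $S^2=nI-J$. From $Sj=0$ we get $(A-A^T)j=0$, so out-degree equals in-degree at every vertex and $G$ is regular of valency $(n-1)/2$; with $A^T=J-I-A$ and $AJ=JA=\tfrac{n-1}{2}J$ this gives $AA^T=A^TA$. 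Finally $(A+A^T)^2-(A-A^T)^2=2(AA^T+A^TA)=4AA^T$, while $(A+A^T)^2=(J-I)^2=(n-2)J+I$ and $(A-A^T)^2=-S^2=J-nI$, so $AA^T=\tfrac{n-3}{4}J+\tfrac{n+1}{4}I$; hence every pair of distinct vertices has the same number of common out-neighbours and $G$ is doubly regular. For the converse, if $A$ is the adjacency matrix of a doubly regular tournament then $AJ=JA=\tfrac{n-1}{2}J$ and $AA^T=A^TA=\tfrac{n+1}{4}I+\tfrac{n-3}{4}J$, and a direct computation with $S=\sqrt{-1}(2A-J+I)$ (using $A+A^T=J-I$) gives $S^2=nI-J$ and $Sj=0$; so $S$ has the spectrum above, $\beta_1=0$, $G$ is of Type~$(1)$, and part $(1)$ of Theorem~\ref{thm:dim_tou} gives $\Rep(G)=n-\tfrac{n-1}{2}-1=\tfrac{n-1}{2}$, whence $d=\tfrac{n-1}{2}$ and $|X|=n=2d+1$.

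The step I expect to be the main obstacle is the use of the equality hypothesis: it needs the rigidity coming from $\overline{S}=-S$ (both to isolate Type~$(1)$ with the maximal multiplicity and to pin down the full spectrum of $S$), and then the translation of the spectral identity $S^2=nI-J$, together with the regularity extracted from $Sj=0$, into the purely combinatorial conclusion that the common-out-neighbour count is constant; the small case $n=3$ also has to be dealt with directly.
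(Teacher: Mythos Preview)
Your argument is correct, and it takes a genuinely different route from the paper's own proof. The paper dispatches the equality case in two lines by quoting Example~6.3 of \cite{RSX}, which characterizes tight $2$-codes as non-symmetric association schemes of class~$2$, and then identifies these with doubly regular tournaments. You instead stay entirely inside the paper: you use the spectral symmetry $\overline S=-S$ to force $m_1=m_s$ and $\beta_1=\beta_s$, feed $\Rep(G)=(n-1)/2$ through the case split of Theorem~\ref{thm:dim_tou} to isolate Type~$(1)$ with $m_1=(n-1)/2$, and then read off $S^2=nI-J$, $Sj=0$ and the explicit formula for $AA^T$. This is exactly the machinery the paper develops for Theorems~\ref{thm:tighteven} and~\ref{thm:almosttightodd}, so your proof makes Theorem~\ref{thm:tightodd} uniform with those and independent of \cite{RSX}; the paper's proof is shorter but imports the structure theory from outside.

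Two small remarks. First, your Type~$(3)$ elimination can be tightened: once $s=3$ and $\tau_2=0$, the symmetry $\beta_1=\beta_3$, $\tau_1=-\tau_3$ forces $c_2=0$, so Type~$(3)$ is impossible outright and the $n=3$ side-check is not needed (though harmless). Second, in the converse you pass from $\Rep(G)=(n-1)/2$ to ``$d=(n-1)/2$''; strictly speaking this shows that a doubly regular tournament \emph{can be realized} as a tight $2$-code in $\Omega((n-1)/2)$, which is the intended content of the ``if'' direction, but the theorem as literally phrased (for a fixed $X\subset\Omega(d)$) would require the extra word that $X$ spans. The paper's citation-based proof has the same implicit convention.
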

\begin{proof}
The absolute bound \eqref{eq:2_bound} shows that $|X|\leq 2d+1$ holds.
Example~6.3 in \cite{RSX} shows that equality holds if and only if $\{I,A,J-A-I\}$ forms the set of adjacency matrices of a non-symmetric association scheme of class $2$.
The latter condition is equivalent to the condition that $A$ is the adjacency matrix 
of a doubly regular tournament.
\end{proof}

To prove Theorems~\ref{thm:tighteven}, \ref{thm:almosttightodd}, we need the following lemmas. 
\begin{lemma}\label{lem:tou1}
There exists no tournament $A$ of Type 
$(1)$ with $n=2d$ vertices and the spectrum $\{(-\theta)^{d-1},0^2,(\theta)^{d-1}\}$ where $0<\theta$.
\end{lemma}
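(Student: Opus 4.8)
The plan is to argue by contradiction: suppose a tournament $G$ with adjacency matrix $A$ satisfies the stated hypotheses. Write $S=\sqrt{-1}(A-A^T)$ for its Seidel matrix, so $S$ is Hermitian with the three distinct eigenvalues $\tau_1=-\theta<\tau_2=0<\tau_3=\theta$ of multiplicities $m_1=m_3=d-1$ and $m_2=2$ (consistent with $2(d-1)+2=n=2d$). Let $E_1,E_2,E_3$ be the orthogonal projections onto the corresponding eigenspaces and $j$ the all-ones vector, so the main angles satisfy $\beta_i^2=(E_ij)^*(E_ij)/n$ and $\beta_1^2+\beta_2^2+\beta_3^2=1$. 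By definition, $G$ being of Type~$(1)$ means precisely that $\beta_1=0$.

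The crux is a conjugation symmetry of $S$. Since $A$ is a real $(0,1)$-matrix we have $\overline{S}=\overline{\sqrt{-1}}\,(A-A^T)=-S$. Hence if $v$ is a $\theta$-eigenvector of $S$, then $\overline{v}$ satisfies $S\overline{v}=-\theta\,\overline{v}$, and entrywise conjugation is a bijection of the $\theta$-eigenspace onto the $(-\theta)$-eigenspace that conjugates inner products, so it carries an orthonormal basis of the former to an orthonormal basis of the latter. Summing the associated rank-one projections gives $E_1=\overline{E_3}$. As $j$ is real, $E_1j=\overline{E_3}\,j=\overline{E_3j}$, whence $(E_1j)^*(E_1j)=(E_3j)^*(E_3j)$; that is, $\beta_1=\beta_3$.

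Now I would finish the argument. By Type~$(1)$ we have $\beta_1=0$, hence also $\beta_3=0$, and therefore $\beta_2^2=1$. Equality in $(E_2j)^*(E_2j)\le j^*j$ forces $j$ to lie in the range of $E_2$, i.e.\ in the kernel of $S$, so $(A-A^T)j=0$. This says every vertex of $G$ has equal in-degree and out-degree, so $G$ is a regular tournament; but then each out-degree equals $(n-1)/2=(2d-1)/2$, which is not an integer. This contradiction shows that no such tournament exists.

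The argument is short, and the single delicate point is the identity $\beta_1=\beta_3$, which I expect to be the (mild) main obstacle: one must check $\overline{S}=-S$ and that conjugation transports an orthonormal eigenbasis of the $\theta$-eigenspace to one of the $(-\theta)$-eigenspace, so that $E_1=\overline{E_3}$ holds exactly. Once that is in place, the parity obstruction — $n=2d$ is even while a regular tournament has odd order — closes the proof immediately.
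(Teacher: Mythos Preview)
Your proof is correct and follows essentially the same route as the paper: show $\beta_1=\beta_3$, deduce from Type~$(1)$ that $j$ lies in the $0$-eigenspace so $Sj=0$, and obtain the parity contradiction with $n=2d$. The paper's version simply asserts $\beta_1=\beta_3=0$ and $Sj=0$ without justification, whereas you supply the conjugation argument $\overline{S}=-S\Rightarrow E_1=\overline{E_3}$ that underpins this symmetry; so your write-up is in fact more complete than the original on this point.
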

\begin{proof}
Suppose that there exists such a tournament with Seidel matrix $S$. 
It holds that
$S j=0$ because $\beta_1=\beta_3=0$ and the remaining eigenvalues are all $0$.
However it does not happen because $n=2d$.
\end{proof}

\begin{lemma}\label{lem:tou2}
Let $d$ be an integer at least $3$.
Let $A$ be the adjacency matrix of a tournament of Type 
$(1)$ with $n=2d$ vertices and the spectrum $\{(-\theta)^{d-1},(-\phi)^{1},(\phi)^{1},(\theta)^{d-1}\}$ where $0<\phi<\theta$. 
Then $d$ is odd and $A$ is the adjacency matrix of an induced subgraph of a doubly regular tournament by deleting a vertex.  
\end{lemma}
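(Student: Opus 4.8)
The plan is to extract a combinatorial structure from the spectral data and then recognize it as a deleted vertex neighbourhood of a doubly regular tournament. First I would set $S = \sqrt{-1}(A - A^T)$, the Seidel matrix, which is Hermitian with the prescribed spectrum $\{(-\theta)^{d-1}, (-\phi)^1, (\phi)^1, (\theta)^1 \cdot \dots\}$ — actually $\{(-\theta)^{d-1}, (-\phi)^1, (\phi)^1, (\theta)^{d-1}\}$. Since the tournament is of Type~$(1)$, we have $\beta_1 = 0$, i.e.\ the all-ones vector $j$ is orthogonal to the $(-\theta)$-eigenspace. The total multiplicity count gives $2(d-1) + 2 = 2d = n$, so the spectrum is accounted for. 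The key observation is to look at $S^2$: it has eigenvalue $\theta^2$ with multiplicity $2(d-1)$ and eigenvalue $\phi^2$ with multiplicity $2$, and $j$ lies in the span of the $\phi^2$-eigenspace together with (possibly) part of the $\theta^2$-eigenspace; but since $\beta_1 = \beta_3 = 0$ would force $Sj$ to involve only $\pm\phi$ components, whereas Lemma~\ref{lem:tou1} already rules out the degenerate case, here $\beta_2$ or the $\phi$-main angle is nonzero. I would compute, via Lemma~\ref{lem:char} applied to $M = S + aJ$ (equivalently tracking the representation angle $\alpha$ from Theorem~\ref{thm:dim_tou}(1)), the precise value forcing $\mu_{l_1} = -\theta$, and read off that $\Rep(G) = n - m_1 - 1 = 2d - (d-1) - 1 = d$, consistent with $A$ being an induced subtournament of something living in $\Omega(d)$.

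The heart of the argument is to show that $A$, together with one extra vertex, forms a doubly regular tournament on $2d+1$ vertices. By Theorem~\ref{thm:tightodd}, a doubly regular tournament on $2d+1$ vertices exists in $\Omega(d)$ precisely when $2d+1$ is attained as a tight $2$-code, and such a tournament has Seidel matrix with spectrum supported on two eigenvalues $\pm\sqrt{2d+1}$ (the classical fact that a doubly regular tournament of order $n = 4t+3$ has Seidel eigenvalues $\pm\sqrt{n}$, each with multiplicity $(n\mp 1)/2$ after accounting for the all-ones direction — more precisely its conference-matrix-like structure). I would run the deletion in reverse: suppose $A$ is the Seidel matrix of the claimed deleted tournament; then adding back the deleted vertex $v$ with its in/out pattern $\epsilon \in \{\pm 1\}^{2d}$ should produce a Seidel matrix $S'$ of size $2d+1$ with $S'^2 = (2d+1)I$. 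Working this out forces $\theta = \sqrt{2d+1}$, $\phi = 1$, and pins down $\epsilon$ up to the automorphism; the equation $S'^2 = (2d+1)I$ decomposes into a block condition on $S^2$ (giving $S^2 + \epsilon\epsilon^T = (2d+1)I$ on the $2d \times 2d$ block, hence $S^2$ has the required form) plus $S\epsilon = 0$ for the border. Since $\phi < \theta$ forces $\phi^2 = 1 < 2d+1 = \theta^2$, and the multiplicities must be $(d-1, 1, 1, d-1)$, comparing with the known spectrum of a doubly regular tournament of order $2d+1$ (which has Seidel eigenvalue multiplicities $d$ and $d$, split as $1 + (d-1)$ once the $j$-direction is separated) shows the numbers match only when $2d+1 \equiv 3 \pmod 4$, i.e.\ $d$ is odd.

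Concretely, the steps in order: (i) translate the spectral hypothesis into $S^2$ having two eigenvalues $\theta^2 > \phi^2$ with multiplicities $2(d-1)$ and $2$, and use $\beta_1 = 0$ to locate $j$; (ii) show $\phi = 1$ by exhibiting the vector $\epsilon = S$-image data or by a trace/rank count on $S^2 - \phi^2 I$, whose rank $2(d-1)$ and the integrality of $S^2$ force $\phi^2 = 1$; (iii) form the candidate $\epsilon \in \{\pm 1\}^{2d}$ spanning the $\phi^2 = 1$ eigenspace appropriately (two-dimensional, containing $j$), normalize, and verify $S\epsilon = 0$; (iv) set $S' = \left(\begin{smallmatrix} 0 & \epsilon^T \\ -\epsilon & S\end{smallmatrix}\right)$ up to sign conventions and check $S'^2 = (2d+1)I$, so $\theta^2 = 2d+1$; (v) conclude $S'$ is the Seidel matrix of a doubly regular tournament of order $2d+1$, whence by the order constraint $2d+1 = 4t+3$, $d = 2t+1$ is odd, and $A$ is obtained by deleting the vertex $v$. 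The main obstacle I expect is step (iii)–(iv): pinning down the $\pm 1$ border vector $\epsilon$ from the eigenspace data is not automatic, because the $\phi^2$-eigenspace of $S^2$ is two-dimensional and one must argue that it contains a genuine $\{\pm1\}$-vector orthogonal to $j$ (or that $j$ itself, suitably scaled, plus one more $\pm1$ vector, does the job) — this requires using that $S$ has integer-ish structure ($A - A^T$ is a $\{0, \pm 1\}$ skew matrix) together with the rigidity of $S^2$ being forced into the block form $kI + lJ$ on each half, which is exactly the structure flagged in item~\eqref{en:3} of the introduction.
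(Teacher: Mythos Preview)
Your outline aims in the right direction but has a genuine gap at step~(ii), and the paper's proof shows why. From integrality of $S^2$ (the entries of $-(A-A^T)^2$ are integers) and the fact that the multiplicities $2(d-1)$ and $2$ differ, you do get that $\theta^2,\phi^2\in\mathbb{Z}$. But integrality and rank alone do \emph{not} force $\phi^2=1$. The trace gives $(d-1)\theta^2+\phi^2=d(2d-1)$, and together with $0<\phi^2<\theta^2$ this leaves exactly two possibilities: $(\theta^2,\phi^2)=(2d+1,1)$ or $(\theta^2,\phi^2)=(2d,d)$. You never isolate the second case, and it requires a separate argument. The paper eliminates it cleanly: the matrix $\theta^2 I-S^2$ is positive semidefinite with all diagonal entries equal to $1$, so every off-diagonal entry has absolute value at most $1$; but off-diagonal entries of $S^2$ are sums of $n-2=2d-2$ terms each equal to $\pm 1$, hence even, forcing all off-diagonals of $\theta^2 I-S^2$ to vanish. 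That would give $S^2=(2d-1)I$, contradicting the eigenvalue $\phi^2=d$.

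For the remaining case $(\theta^2,\phi^2)=(2d+1,1)$, what you are trying to build in steps (iii)--(iv) --- the $\{\pm1\}$ border vector $\epsilon$ and the check that the bordered Seidel matrix $S'$ satisfies $S'^2=(2d+1)I$ --- is precisely the content of \cite[Theorem~1.1]{NSudapre}, which the paper simply cites. Your concern that extracting a genuine $\{\pm1\}$ vector from the two-dimensional $\phi^2$-eigenspace is ``not automatic'' is well founded; that is exactly the work done in that reference, and you should invoke it rather than leave it as an acknowledged obstacle. Once the bordering is in hand, $d$ odd follows from $2d+1\equiv 3\pmod 4$. Finally, your last paragraph conflates this situation with the block form $\left(\begin{smallmatrix} kI+lJ & 0\\ 0 & kI+lJ\end{smallmatrix}\right)$ from item~\eqref{en:3}: that structure belongs to the Type~(3) analysis (Lemma~\ref{lem:tou4}), not to the Type~(1) case under consideration here.
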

\begin{proof}
Since the entries of $S^2$ are integers, the eigenvalues of $S^2$ are algebraic integers.
Therefore $\theta^2$ and $\phi^2$ are integer because their multiplicities $2d-2$ and $2$ are different. 
From taking the trace of $S^2$, it follows that the possibility of $(\theta^2,\phi^2)$ is $(2d+1,1)$ or $(2d,d)$.

For the first case, 
$A$ is the adjacency matrix of an induced subgraph of a doubly regular tournament by deleting a vertex \cite[Theorem 1.1]{NSudapre}.  
Thus $n+1=2d+1$ must be congruent to $3$ modulo $4$, which implies that $d$ is odd.

For the second case, consider $\theta^2 I-S^2$.
Since $\theta^2 I-S^2$ is positive semidefinite and the diagonal entries are all $1$, 
the absolute value of an off-diagonal entry of this matrix must be at most $1$.
In fact they must be zero because the size of the matrix $\theta^2 I-S^2$ is even.
Therefore $S^2=(\theta^2-1) I$, which contradicts the fact that $S^2$ has the other eigenvalue $\phi^2$.
\end{proof}

\begin{lemma}\label{lem:tou3}
Let $A$ be the adjacency matrix of a tournament of Type 
$(2)$ with $n=2d$ vertices and the spectrum $\{(-\theta)^{d},(\theta)^{d}\}$ where $0<\theta$. 
Then $d$ is even and $I+A-A^T$ is a skew Hadamard matrix.
\end{lemma}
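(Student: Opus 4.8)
The plan is to exploit the very rigid spectrum $\{(-\theta)^d,(\theta)^d\}$ of the Seidel matrix $S=\sqrt{-1}(A-A^T)$. First I would note that $S$ has exactly two distinct eigenvalues $\pm\theta$, so $S^2=\theta^2 I$, and since $G$ is of Type~$(2)$ the smallest eigenvalue $-\theta$ is main (i.e.\ $\beta_1\neq 0$) with multiplicity $m_1=d>1$. Because $S$ is a real skew-symmetric matrix of even order $n=2d$, its nonzero eigenvalues come in pairs $\pm\sqrt{-1}\,\nu$; here the eigenvalues of $S$ are $\pm\theta$, which forces $\theta$ to arise from an integer (the entries of $S^2=\theta^2 I$ are integers, so $\theta^2\in\mathbb{Z}$; taking the trace of $S^2$ gives $n\theta^2 = \sum_{x\neq y}|S_{xy}|^2 = n(n-1)$, hence $\theta^2=n-1=2d-1$).

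Next I would pin down the representation and the matrix $I+A-A^T$. Writing $N=A-A^T$, a real skew-symmetric $(0,\pm1)$-matrix with zero diagonal, we have $S=\sqrt{-1}N$ and $S^2=-N^2=\theta^2 I$, so $N^2=-(2d-1)I$. Then
\[
(I+N)(I+N)^T=(I+N)(I-N)=I-N^2=I+(2d-1)I=2d\,I=nI,
\]
and clearly $(I+N)+(I+N)^T=2I$ since $N$ is skew-symmetric; moreover $I+N=I+A-A^T$ is a $(\pm1)$-matrix because $A$ is a tournament adjacency matrix, so off the diagonal $A_{xy}-A_{yx}=\pm1$. Hence $I+A-A^T$ satisfies both defining identities of a skew Hadamard matrix of order $n=2d$.

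It remains to show $d$ is even. This follows from the classical fact that a skew Hadamard matrix of order $n>1$ has $n\equiv 0\pmod 4$ (equivalently, from the well-known divisibility of Hadamard orders together with skew-symmetry), which gives $4\mid 2d$, i.e.\ $d$ is even; alternatively one can argue directly from $N^2=-(2d-1)I$, as $N$ being an integer matrix with $\det(N)^2=(2d-1)^{n}$ and $N$ skew-symmetric forces $(2d-1)^{d}$ to be a perfect square, which combined with the structure of $N$ modulo $2$ yields $n\equiv 0\pmod 4$. I expect the only real subtlety to be making this last divisibility argument clean and self-contained; invoking the standard skew Hadamard order condition (as the lemma's conclusion essentially invites) sidesteps it, so that is the route I would take, and the bulk of the proof is then the short computation above showing $I+A-A^T$ is skew Hadamard.
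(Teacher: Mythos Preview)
Your proposal is correct and is exactly the ``direct calculation'' the paper alludes to: from the given spectrum one has $S^2=\theta^2 I$, the trace forces $\theta^2=n-1$, hence $N=A-A^T$ satisfies $N^2=-(n-1)I$, and then $(I+N)(I+N)^T=(I+N)(I-N)=I-N^2=nI$ together with $(I+N)+(I+N)^T=2I$ and the $(\pm1)$-entries give the skew Hadamard conclusion, whence $4\mid n$ and $d$ is even. One small slip worth cleaning up: you write that ``$S$ is a real skew-symmetric matrix'' with eigenvalues in pairs $\pm\sqrt{-1}\,\nu$, but $S=\sqrt{-1}(A-A^T)$ is Hermitian with the real eigenvalues $\pm\theta$ given in the hypothesis---it is $N=A-A^T$ that is real skew-symmetric---though this confusion plays no role in your actual computation.
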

\begin{proof}
The fact that  $I+A-A^T$ is a skew Hadamard matrix follows from direct calculation, and thus $d$ must be even. 
\end{proof}

\begin{lemma}\label{lem:tou4}
Let $A$ be the adjacency matrix of a tournament of Type 
$(3)$ with the spectrum $\{(-\theta)^{1},(-\phi)^{d-1},(\phi)^{d-1},(\theta)^{1}\}$ where $0<\phi<\theta$. 
Then $d$ is odd and the Seidel matrix $S$ satisfies that $S^2$ is permutaionally similar to 
\begin{align}\label{eq:seidel}
\begin{pmatrix}
k I+l J&0\\0&k I+l J
\end{pmatrix}, 
\end{align}
for some positive integers $k,l$.
\end{lemma}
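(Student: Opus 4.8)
The plan is to use that, by Theorem~\ref{thm:dim_tou}, a Type~$(3)$ tournament has $\beta_2=0$, where $\tau_2=-\phi$ is the second smallest distinct eigenvalue of $S$. The stated spectrum forces $n=2d$ (and $d\ge 2$, since the four eigenvalues have the indicated multiplicities). Since $S=\sqrt{-1}(A-A^T)$ with $A-A^T$ real, the spectrum of $S$ is symmetric about $0$ and the $(-\lambda)$-eigenspace of $S$ is the complex conjugate of the $\lambda$-eigenspace; applying this to the real vector $j$ shows that $\beta_2=0$ forces $\beta_3=0$, so $j$ has no component in the $\pm\phi$-eigenspaces of $S$. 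Equivalently, $j$ lies in the two-dimensional subspace $W$, the eigenspace of $S^2$ for its largest eigenvalue $\theta^2$.

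Next I would pin down $S^2$. Put $C=A-A^T$ and $\delta:=Cj$, so that $\delta_x=d^+(x)-d^-(x)$ is the difference of the out- and in-degrees of $x$. Since $C$ commutes with $C^2=-S^2$ it preserves $W$, hence $\delta\in W$; moreover $\delta\perp j$ (as $C$ is skew-symmetric) and $\|\delta\|^2=-j^TC^2j=\theta^2 n\ne 0$, so $\{j,\delta\}$ is an orthogonal basis of $W$ and the orthogonal projection onto $W$ is $Q=\frac1{2d}J+\frac1{2d\theta^2}\delta\delta^T$. Therefore $S^2=\phi^2 I+(\theta^2-\phi^2)Q$. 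Every diagonal entry of $S^2=-C^2$ equals $\sum_z C_{xz}^2=n-1=2d-1$ (each $C_{xz}$ is $\pm1$ for $z\ne x$ and $0$ for $z=x$), so taking traces gives $2\theta^2+2(d-1)\phi^2=n(n-1)$, whence $\theta^2-\phi^2=d\,l$ with $l:=2d-1-\phi^2$, and $l\ge 1$ because $\phi^2<\theta^2$. Thus
\[
S^2=\phi^2 I+\tfrac{l}{2}J+\tfrac{l}{2\theta^2}\,\delta\delta^T ,
\]
and comparing the $(x,x)$-entry with $2d-1$ yields $\tfrac{l}{2\theta^2}\delta_x^2=\tfrac{l}{2}$, i.e.\ $\delta_x^2=\theta^2$ for every $x$. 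In particular $\theta=|\delta_x|$ is a positive integer, hence $\phi^2=\bigl(d(2d-1)-\theta^2\bigr)/(d-1)$ is a positive integer and so is $l$. Writing $\delta=\theta\chi$ with $\chi\in\{\pm1\}^n$ we get $\delta\delta^T=\theta^2\chi\chi^T$ and $S^2=\phi^2 I+\tfrac l2(J+\chi\chi^T)$; since $\sum_x\delta_x=j^TCj=0$, the vector $\chi$ has equally many $+1$'s and $-1$'s, so permuting the vertices to put the $+1$'s first turns $J+\chi\chi^T$ into $\bigl(\begin{smallmatrix}2J_d&0\\0&2J_d\end{smallmatrix}\bigr)$. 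This gives $S^2$ permutationally similar to $\bigl(\begin{smallmatrix}kI+lJ&0\\0&kI+lJ\end{smallmatrix}\bigr)$ with $k=\phi^2$ and $l=2d-1-\phi^2$, both positive integers.

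It remains to show $d$ is odd. Since $d^+(x)+d^-(x)=n-1=2d-1$ is odd, each $\delta_x$ is odd, so $\theta$ is an odd positive integer. From $C\delta=C^2j=-\theta^2 j$ we obtain $C\chi=-\theta j$, i.e.\ $\sum_y C_{xy}\chi_y=-\theta$ for every $x$; combining this with $\sum_y C_{xy}=\delta_x$ and splitting the sums over the two vertex classes $\{\chi=+1\}$ and $\{\chi=-1\}$ shows that each vertex of the first class sends exactly $(d+\theta)/2$ arcs into the second class. As this count is an integer and $\theta$ is odd, $d$ must be odd. The crux of the argument is the first step, recognising that $j\in W$ so that $W=\mathrm{span}\{j,\delta\}$ and hence deriving the closed form for $S^2$ together with the identity $\delta_x^2=\theta^2$; once that is in hand the block decomposition and the parity both follow by elementary degree counting.
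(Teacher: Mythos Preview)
Your proof is correct and follows essentially the same route as the paper's. Both arguments show that $j$ lies in the two-dimensional $\theta^2$-eigenspace $W$ of $S^2$, pick a second basis vector of $W$ orthogonal to $j$, and use the constancy of the diagonal of $S^2$ to force that vector to have entries $\pm 1/\sqrt{n}$ in equal numbers, yielding the block form. The only differences are cosmetic: you take the explicit combinatorial vector $\delta=(A-A^T)j$ (the score-difference vector) as the second basis element and deduce along the way that $\theta=|\delta_x|$ is an odd integer, whereas the paper uses an abstract unit eigenvector $x$; and for the parity of $d$ you count arcs from the $+1$-class into the $-1$-class and use that $(d+\theta)/2\in\mathbb{Z}$ with $\theta$ odd, while the paper instead observes that the induced subtournament on one class is regular (forcing its order $d$ to be odd). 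These are equivalent observations, and your explicit choice of $\delta$ makes the link to degree sequences a little more transparent.
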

\begin{proof}
By the condition of Type (3),  $\beta_2=\beta_3=0$ and $\beta_1=\beta_4=1/\sqrt{2}$ hold.
Consider the eigenspaces of $S^2-\phi^2 I$. 
The main angle condition of $S$ implies that the all-ones vector is an eigenvector of $S^2-\phi^2 I$ corresponding to the  eigenvalue $\theta^2-\phi^2$. 
Since the multiplicity of $\theta^2-\phi^2$ is two, let $x$ be the remaining normalized real eigenvector orthogonal to $j$.
Then it holds that 
\begin{align*}
S^2=\phi^2 I+(\theta^2-\phi^2)((1/n)J+x x^T).
\end{align*}
Comparing the diagonal entries, we observe that $n-1=\phi^2+(\theta^2-\phi^2)(1/n+x_i^2)$ for each $i$, where $x_i$ is the $i$-th entry of $x$. 
This implies that $x_i^2$ is independent of the choice of $i$. 
Since the vector $x$ is normalized, we obtain $x_i=\pm1/\sqrt{n}$.
The assumption that $x$ is orthogonal to the all-ones vector shows that each $\pm1/\sqrt{n}$ appears in the entries of $x$ exactly same times.
After some permutation of entries, we may assume that  the first half entries of $x$ are $1/\sqrt{n}$ which means $S^2$ has the form 
\begin{align*}
S^2=\begin{pmatrix}\phi^2 I+\frac{2(\theta^2-\phi^2)}{n}J&0\\0&\phi^2 I+\frac{2(\theta^2-\phi^2)}{n}J   \end{pmatrix}.
\end{align*}  
Since a vector $S(j+\sqrt{n}x)$ is written as a linear combination of $j,x$ and $S=\sqrt{-1}(2A-J+I)$, we have
\begin{align*}
A\begin{pmatrix}j\\0\end{pmatrix}=\begin{pmatrix}a j\\b j\end{pmatrix}
\end{align*}
for some $a,b$. 
Letting $A_1$ be the principal submatrix of $A$ lying the first $d$ rows and columns, 
then  $A_1j=aj$, namely $A_1$ is the adjacency matrix of a regular tournament of order $d$. 
This implies $d$ must be odd.
\end{proof}

\begin{lemma}\label{lem:n=2d}
Let $X$ be a finite subset in $\Omega(d)$ with degree $2$ and size $n=2d$.
The possibilities of the spectrum of $S=\sqrt{-1}(A-A^T)$ are as follows:
\begin{enumerate}
\item[
$({\rm i})$] $X$ is of Type 
$(1)$ with the spectrum $\{(-\theta)^{d-1},0^2,(\theta)^{d-1}\}$.
\item[
$({\rm ii})$] $X$ is of Type 
$(1)$ with the spectrum $\{(-\theta)^{d-1},(-\phi)^{1},(\phi)^{1},(\theta)^{d-1}\}$ with $0<\phi<\theta$.
\item[
$({\rm iii})$] $X$ is of Type 
$(2)$ with the spectrum $\{(-\theta)^{d},(\theta)^{d}\}$.
\item[
$({\rm iv})$] $X$ is of Type 
$(3)$ with the spectrum $\{(-\theta)^{1},(-\phi)^{d-1},(\phi)^{d-1},(\theta)^{1}\}$ with $0<\phi<\theta$.
\end{enumerate}
\end{lemma}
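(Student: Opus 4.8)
The plan is to use the machinery of Theorem~\ref{thm:dim_tou} applied to the Seidel matrix $S=\sqrt{-1}(A-A^T)$ of the tournament coming from $X$, together with the constraint $\Rep(G)=d$ and $n=2d$. Since $X$ embeds into $\Omega(d)$ with $n=2d$ vertices, we have $\Rep(G)\le d$, and in fact $\Rep(G)=d$ (any embedding into a smaller dimension would embed into $\Omega(d)$, so $d$ is the relevant dimension here; more precisely $n=2d$ forces equality as no $2$-code of this size fits in a strictly smaller sphere by the absolute bound~\eqref{eq:2_bound}). So I would first record that $G$ is of exactly one of Types $(1)$--$(4)$, these being disjoint, and then run each type through the dimension formula of Theorem~\ref{thm:dim_tou}, setting the resulting expression equal to $d=n/2$.

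First I would rule out Type $(4)$: there $\Rep(G)=n-1=2d-1\ne d$ (since $d\ge 1$, equality $2d-1=d$ would force $d=1$, and $n=2$, $d=1$ is a trivial degenerate case that can be excluded — a $2$-code needs an imaginary inner product, which already requires $d\ge 2$, hence $n\ge 4$). Next, Type $(1)$ gives $\Rep(G)=n-m_1-1=d$, so $m_1=d-1$; since $S$ has trace zero and $\beta_1=0$ (so $\tau_1$ is non-main), and the eigenvalues come in $\pm$ pairs because $S$ is skew-symmetric times $\sqrt{-1}$... more carefully: $S$ is Hermitian with $\overline{S}=-S$, so its spectrum is symmetric about $0$, giving $\tau_s=-\tau_1>0$ with multiplicity $m_s=m_1=d-1$; the remaining $2$ eigenvalues (counted with multiplicity) are either $0^2$ — case $(\mathrm{i})$ — or a symmetric pair $\{-\phi,\phi\}$ with $0<\phi<\theta:=\tau_s$ — case $(\mathrm{ii})$; the case $\phi=\theta$ is impossible since then all of $S$'s nonzero eigenvalues coincide and $S^2=\theta^2 I$, forcing (by the positive-semidefiniteness of $\theta^2 I-S^2=0$, diagonal $1$) the wrong rank. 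Type $(2)$ gives $\Rep(G)=n-m_1=d$, so $m_1=d$; with $\beta_1\ne0$, $m_1>1$, and the symmetric spectrum, the only option is $\{(-\theta)^d,(\theta)^d\}$ — case $(\mathrm{iii})$. Type $(3)$ gives $\Rep(G)=n-m_2-1=d$, so $m_2=d-1$; here $m_1=1$, $\beta_2=0$, $c_2<0$, and the symmetric spectrum with $m_1=1$ forces exactly $\{(-\theta)^1,(-\phi)^{d-1},(\phi)^{d-1},(\theta)^1\}$ with $0<\phi<\theta$ — case $(\mathrm{iv})$.

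The remaining verification is that each of these four spectral patterns is actually consistent with the Type it sits under, i.e. that the $\beta_i$ and $c_i$ conditions defining the Type are automatically satisfied (or at least not contradicted) — for instance in case $(\mathrm{i})$ and $(\mathrm{ii})$ that $\beta_1=0$ forces $\beta_s=0$ by the $\pm$ symmetry of $S$ (the main-angle of $\tau_i$ and of $-\tau_i$ agree since conjugating interchanges the eigenspaces while fixing $j$), so indeed $j\in\ker S$ in case $(\mathrm{i})$ and $j$ lies in the $\pm\phi$-eigenspace sum in case $(\mathrm{ii})$, both compatible with Type $(1)$; and in case $(\mathrm{iv})$, that $c_2<0$ follows from the sign pattern $\tau_1<\tau_2<\tau_3<\tau_4$ together with $\beta_1=\beta_4=1/\sqrt2$, $\beta_2=\beta_3=0$ — a short computation of $c_2=n\beta_1^2/(\tau_1-\tau_2)+n\beta_4^2/(\tau_4-\tau_2)$ which has a negative first term dominating since $|\tau_1-\tau_2|<|\tau_4-\tau_2|$.

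**The main obstacle** I anticipate is the bookkeeping in Type $(3)$: showing that the pattern $\{(-\theta)^1,(-\phi)^{d-1},(\phi)^{d-1},(\theta)^1\}$ is the \emph{only} one compatible with $m_1=1$, $\beta_2=0$, $c_2<0$, and $n-m_2-1=d$, and in particular that one cannot have, say, a zero eigenvalue among the middle ones or a different multiplicity split. This is handled by combining the rigid $\pm$-symmetry of the spectrum of $S$ with the equation $m_2=d-1$ and the total count $m_1+m_2+m_3+m_4=2d$, $m_1=m_4$, $m_2=m_3$, forcing $m_1=m_4=1$; the condition $c_2<0$ (needed for Type $(3)$ rather than Type $(4)$) then eliminates the degenerate sub-cases, exactly as Lemma~\ref{lem:tou4} later exploits. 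Once the arithmetic of the multiplicities is pinned down, each case drops out mechanically.
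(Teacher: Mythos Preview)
Your approach is correct and is exactly what the paper's one-line proof (``Follows from Theorem~\ref{thm:dim_tou}'') intends: establish $\Rep(G)=d$ via the absolute bound, run through the four Types, and use the $\pm$-symmetry of the spectrum of $S$ (from $\overline{S}=-S$) to pin down the multiplicities in each case. The ``remaining verification'' paragraph is unnecessary for the lemma as stated (it only asserts necessary conditions, and indeed case~(i) is later ruled out by Lemma~\ref{lem:tou1}), and your exclusion of $d=1$ should rest on context rather than on the incorrect claim that $2$-codes require $d\ge 2$.
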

\begin{proof}
Follows from Theorem~\ref{thm:dim_tou}.
\end{proof}

\begin{theorem}\label{thm:tighteven}
Let $X$ be a finite subset of $\Omega(d)$ of size $n$ with degree $2$, 
and let $A$ be the adjacency matrix of $X$.
If $d$ is even, $|X| \leq 2d$ holds. Equality holds if and only if $I+A-A^T$ is a skew Hadamard matrix.
\end{theorem}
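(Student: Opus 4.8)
The absolute bound \eqref{eq:2_bound} immediately gives $|X|\le 2d$ when $d$ is even, so the work is in the equality case. First I would observe that $|X|=2d$ means $n=2d$ with $X$ representable in $\Omega(d)$, so the Seidel matrix $S=\sqrt{-1}(A-A^T)$ of the associated tournament falls into one of the four cases of Lemma~\ref{lem:n=2d}. The strategy is then to eliminate cases (i), (ii), (iv) using the parity of $d$, leaving only case (iii), and then invoke Lemma~\ref{lem:tou3} to convert case (iii) into the statement that $I+A-A^T$ is a skew Hadamard matrix. For the converse, if $I+A-A^T$ is a skew Hadamard matrix $H$ of order $2d$, then $S=\sqrt{-1}(H-I)$, and a direct computation of $S^2$ (using $HH^T=2dI$ and $H+H^T=2I$) shows $S$ has exactly two eigenvalues $\pm\theta$ with $\theta^2=2d-1$, each of multiplicity $d$; by Lemma~\ref{lem:tou3}'s setup (Type~$(2)$ with spectrum $\{(-\theta)^d,(\theta)^d\}$) and Theorem~\ref{thm:dim_tou}(2), $\Rep(G)=n-m_1=2d-d=d$, so $X$ embeds in $\Omega(d)$ with $|X|=2d$, attaining the bound.

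For the case elimination: Lemma~\ref{lem:tou1} outright rules out case (i) for any $n=2d$ (the eigenvector condition $Sj=0$ forces a row of $S$ summing to zero, impossible since $S$ has odd-sized support structure — actually because $S$ is skew-Hermitian with zero diagonal and $2d$ is even, the argument there applies). Case (ii) is handled by Lemma~\ref{lem:tou2}, which concludes that $d$ must be \emph{odd}; hence for even $d$ this case cannot occur. Case (iv) is handled by Lemma~\ref{lem:tou4}, which likewise forces $d$ odd; so again it is excluded when $d$ is even. The only survivor is case (iii), Type~$(2)$ with spectrum $\{(-\theta)^d,(\theta)^d\}$, and Lemma~\ref{lem:tou3} tells us precisely that in this case $I+A-A^T$ is a skew Hadamard matrix (and $d$ is even, consistent with our hypothesis).

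Putting these together: when $d$ is even and $|X|=2d$, Lemmas~\ref{lem:tou1}, \ref{lem:tou2}, \ref{lem:tou4} exclude cases (i), (ii), (iv) of Lemma~\ref{lem:n=2d}, so case (iii) holds, and Lemma~\ref{lem:tou3} yields that $I+A-A^T$ is skew Hadamard. Conversely, if $I+A-A^T$ is skew Hadamard of order $2d$, the spectral computation above places the tournament in Type~$(2)$ with the spectrum of case (iii), whence $\Rep(G)=d$ and $|X|=2d$ is achievable. This closes the equivalence.

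**Main obstacle.** The genuinely substantive content is not in this theorem's proof itself — which is essentially a bookkeeping assembly of the preceding lemmas — but the one place requiring care is the converse direction's spectral calculation: verifying that a skew Hadamard matrix of order $2d$ gives rise, via $S=\sqrt{-1}(H-I)$, to a Seidel matrix with exactly the two-eigenvalue spectrum $\{(-\theta)^d,(\theta)^d\}$ and that this tournament is genuinely of Type~$(2)$ (i.e.\ $\beta_1\ne 0$ and $m_1>1$), so that Theorem~\ref{thm:dim_tou}(2) applies and delivers $\Rep(G)=n-m_1=d$. One must check that the all-ones vector is not orthogonal to the $(-\theta)$-eigenspace, i.e.\ $\tau_1=-\theta$ is a main eigenvalue; this follows because $Sj=\sqrt{-1}(H-I)j=\sqrt{-1}(Hj-j)$ and $Hj$ is not a scalar multiple making the $(-\theta)$-component vanish — concretely, since the two eigenvalues are $\pm\theta$ and $j$ cannot lie entirely in one eigenspace (as $S j \ne \pm\theta j$ for a skew Hadamard matrix with $d>1$), $j$ has nonzero projection onto both, so both are main and $m_1=d>1$. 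This is the only step that is not pure citation, and it is routine linear algebra once set up correctly.
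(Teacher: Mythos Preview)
Your approach is essentially the paper's own: use Lemma~\ref{lem:n=2d} to reduce to four spectral cases, kill (i), (ii), (iv) via Lemmas~\ref{lem:tou1}, \ref{lem:tou2}, \ref{lem:tou4} and the parity of $d$, and read off the skew Hadamard conclusion from Lemma~\ref{lem:tou3}; for the converse, compute the spectrum of $S=\sqrt{-1}(H-I)$ and invoke Theorem~\ref{thm:dim_tou}(2). Your handling of the converse, including the check that $\tau_1=-\theta$ is main (so that the tournament really is of Type~(2)), is correct and slightly more explicit than the paper's.

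There is one genuine gap. Lemma~\ref{lem:tou2} is stated only for $d\ge 3$: its proof uses that the multiplicities $2d-2$ and $2$ of $\theta^2$ and $\phi^2$ as eigenvalues of $S^2$ are \emph{distinct}, in order to conclude that $\theta^2,\phi^2$ are rational (hence integral) algebraic integers. When $d=2$ these multiplicities coincide, so the argument collapses and Lemma~\ref{lem:tou2} cannot be invoked to eliminate case~(ii). The paper deals with this by treating $d=2$ separately, listing all four tournaments on $4$ vertices, computing $\Rep(G)$ for each, and checking directly that the two with $\Rep(G)=2$ give skew Hadamard matrices $I+A-A^T$. You should insert this small base case (or give some other ad hoc argument for $n=4$) before appealing to the lemma chain for $d\ge 4$.
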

\begin{proof}
A necessary condition for the existence of doubly regular tournaments is $|X|\equiv 3 \pmod{4}$, namely $d$ is odd.
Therefore if $d$ is even then $|X|<2d+1$, that is, $|X|\leq 2d$ holds.

Let $H$ be a skew Hadamard matrix of size $n$.
Then $n$ must be a multiple of $4$.
Define $S=\sqrt{-1}(H-I)$ and $A=\frac{1}{2}(-\sqrt{-1}S+J-I)$.
Then the spectrum of $S$ is $\{(-\sqrt{n-1})^{n/2},(\sqrt{n-1})^{n/2}\}$.
Thus $A$ is of Type (2) and the minimum embedding dimension is $d=n/2$.
Therefore $n=2d$.

Let $X$ be a finite subset of $\Omega(d)$ with degree $2$ and size $n=2d$. 
First we consider the case $d=2$.
In this case, the classification of tournaments of order $4$ is given \cite{Mweb} and the list of $A$ are 
\begin{enumerate}
\item[(a)] $\begin{pmatrix}
 0 & 1 & 1 & 1 \\
 0 & 0 & 1 & 1 \\
 0 & 0 & 0 & 1 \\
 0 & 0 & 0 & 0
\end{pmatrix}$ with $\Rep(G)=3$, \qquad (b) $\begin{pmatrix}
 0 & 1 & 1 & 1 \\
 0 & 0 & 0 & 1 \\
 0 & 1 & 0 & 0 \\
 0 & 0 & 1 & 0
\end{pmatrix}$ with $\Rep(G)=2$,
\item[(c)] $\begin{pmatrix}
 0 & 0 & 1 & 1 \\
 1 & 0 & 1 & 0 \\
 0 & 0 & 0 & 1 \\
 0 & 1 & 0 & 0
\end{pmatrix}$ with $\Rep(G)=3$, \qquad (d) $\begin{pmatrix}
 0 & 0 & 1 & 1 \\
 1 & 0 & 0 & 1 \\
 0 & 1 & 0 & 1 \\
 0 & 0 & 0 & 0
\end{pmatrix}$ with $\Rep(G)=2$.
\end{enumerate}
The tournaments (b) and (d) satisfy $n=2d$, and in these cases, $I+A-A^T$ is a skew Hadamard matrix.

Next we consider the case where $d\geq4$.
By 
Lemmas~\ref{lem:tou1}--\ref{lem:n=2d} and the assumption that $d$ is even, $I+A-A^T$ is a skew Hadamard matrix as desired. 
\end{proof}

\begin{theorem}\label{thm:almosttightodd}
Let $d$ be an odd integer at least $3$. 
Let $X$ be a finite subset of $\Omega(d)$ of size $n$ with degree $2$, 
and let $A$ be the adjacency matrix of the tournament obtained from $X$.
The finite subset $X$ has the size $n=2d$ if and only if one of the following occurs:
\begin{enumerate}
\item[{\rm (i)}] $A$ is the adjacency matrix of an induced subgraph of a doubly regular tournament by deleting a vertex.  
\item[{\rm (ii)}] the Seidel matrix $S$ satisfies that $S^2$ is permutaionally similar to 
\begin{align}\label{eq:seidel}
\begin{pmatrix}
k I+l J&0\\0&k I+l J
\end{pmatrix}, 
\end{align}
for some positive integers $k,l$.
\end{enumerate}
\end{theorem}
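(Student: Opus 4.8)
The plan is to combine the classification of spectra in Lemma~\ref{lem:n=2d} with the structural lemmas already proved for each type. Since $d$ is odd, $|X| \le 2d+1$ by Theorem~\ref{thm:tightodd}, so $n = 2d$ is one below the absolute bound, and the task is to see which of the four types in Lemma~\ref{lem:n=2d} can actually occur when $d$ is odd, and then to read off the structure.

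First I would treat the "only if" direction. Assume $X \subset \Omega(d)$ has degree $2$ and size $n = 2d$ with $d$ odd. By Lemma~\ref{lem:n=2d}, the Seidel matrix $S$ has one of the four spectra (i)--(iv). Type (i) is ruled out by Lemma~\ref{lem:tou1} for all $d$. Type (iii), with spectrum $\{(-\theta)^d,(\theta)^d\}$, forces $d$ even by Lemma~\ref{lem:tou3}, hence cannot occur for odd $d$. This leaves Type (i) of Lemma~\ref{lem:n=2d}'s second alternative (the spectrum with simple eigenvalues $\pm\phi$), which by Lemma~\ref{lem:tou2} yields case (i) of the theorem (an induced subgraph of a doubly regular tournament obtained by deleting a vertex) — note that Lemma~\ref{lem:tou2} moreover confirms $d$ must be odd in that scenario; and Type (3), which by Lemma~\ref{lem:tou4} yields case (ii) of the theorem, the $S^2$ block form, and again forces $d$ odd. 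So for odd $d$, exactly cases (i) and (ii) arise.

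For the "if" direction I would argue the converse for each case. If $A$ is obtained from a doubly regular tournament on $2d+1$ vertices by deleting a vertex: a doubly regular tournament exists only when $2d+1 \equiv 3 \pmod 4$, i.e. $d$ odd, so the hypothesis is consistent; by \cite[Theorem 1.1]{NSudapre} (as used in Lemma~\ref{lem:tou2}) its Seidel matrix has spectrum $\{(-\sqrt{2d+1})^{d-1},(-1)^1,(1)^1,(\sqrt{2d+1})^{d-1}\}$ with $\beta_1 = 0$, so $A$ is of Type $(1)$ and Theorem~\ref{thm:dim_tou}(1) gives $\Rep(G) = n - m_1 - 1 = 2d - (d-1) - 1 = d$, hence $X$ embeds in $\Omega(d)$ with $n = 2d$. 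If instead $S^2$ has the stated block form with parameters $k,l > 0$: one computes the eigenvalues of $S^2$ — they are $k$ (multiplicity $2(d-1)$) and $k + ld$ (multiplicity $2$) — so $S$ has spectrum $\{(-\theta)^1,(-\phi)^{d-1},(\phi)^{d-1},(\theta)^1\}$ with $\phi^2 = k$, $\theta^2 = k + ld$, and the main-angle computation in Lemma~\ref{lem:tou4} shows $\beta_2 = \beta_3 = 0$, $\beta_1 = \beta_4 = 1/\sqrt{2}$, putting $A$ in Type (3); then Theorem~\ref{thm:dim_tou}(3) gives $\Rep(G) = n - m_2 - 1 = 2d - (d-1) - 1 = d$, so again $X \subset \Omega(d)$ has $n = 2d$.

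The main obstacle I anticipate is the "if" direction for case (ii): I need to verify that the block form of $S^2$, together with $S$ being a genuine Seidel matrix of a tournament (skew-Hermitian, entries in $\{0,\pm\sqrt{-1}\}$ off-diagonal, zero diagonal), actually forces the Type (3) conditions of Theorem~\ref{thm:dim_tou} — in particular that $c_2 < 0$ and $m_1 = 1$, $\beta_2 = 0$. The sign condition $c_2 < 0$ should follow because $\tau_1 = \theta$ is the largest eigenvalue while the relevant combination of main-angle terms is computed over eigenvalues below it, but I would need to check this carefully rather than just invoke Lemma~\ref{lem:tou4}, since that lemma assumes Type (3) rather than deriving it. The remaining steps — the spectral bookkeeping and the $\Rep$ count — are routine given Theorem~\ref{thm:dim_tou} and the earlier lemmas.
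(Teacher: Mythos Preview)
Your proposal is correct and follows the same route as the paper: Lemmas~\ref{lem:tou1}--\ref{lem:n=2d} handle the forward direction, and for the converse one computes the spectrum and main angles of $S$ directly to place each case in the appropriate Type of Theorem~\ref{thm:dim_tou}. The concern you flag for case~(ii) is precisely what the paper resolves---not by reversing Lemma~\ref{lem:tou4}, but by observing that the eigenspace of $S^2$ for $k+ld$ is spanned by $j$ and the $(\pm1)$-vector with first half $+1$ and second half $-1$, whence $\beta_2=\beta_3=0$ and (by the spectral symmetry of $S$) $\beta_1^2=\beta_4^2=1/2$; then $c_2 = n\phi/(\phi^2-\theta^2)<0$ follows. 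One small slip: in the paper's ordering $\tau_1=-\theta$ is the \emph{smallest} eigenvalue, not the largest, though the computation of $c_2$ still gives the needed sign; and for case~(i) the paper cites \cite[Remark~2.8]{NSudapre} in addition to Theorem~1.1 there to secure $\beta_1=0$.
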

\begin{proof}
Let $A$ be the adjacency matrix of an induced subgraph of a doubly regular tournament by deleting a vertex.
From Theorem 1.1 and Remark 2.8 in \cite{NSudapre} $A$ is of Type (1) and the minimum embedding dimension is $d=n/2$.
Therefore $n=2d$. 

Let $S$ be the Seidel matrix which satisfies (\ref{eq:seidel}).
By the block form of $S^2$, the eigenvalues $S^2$ are $k+l d,k$ with multiplicities $2,2d-2$ respectively.
Thus the eigenvalues of $S$ are $\pm\sqrt{k+l d},\pm\sqrt{k}$ with multiplicities $1,d-1$ respectively.
The eigenvectors of $S^2$ corresponding to $k+ld$ are the all-ones vector and the $(\pm1)$-vector with the first $d$ entries equal to $1$ and the last $d$ entries equal to $-1$.
This implies that main angles of $S$ corresponding to $\pm \sqrt{k}$ are $0$. 
Thus the adjacency matrix of $S$ is of Type (3) and the minimum embedding dimension $d=n/2$.
Therefore $n=2d$.

Let $X$ be a finite subset in $\Omega(d)$ with degree $2$ and size $n=2d$.
By 
Lemmas~\ref{lem:tou1}--\ref{lem:n=2d} and the assumption that $d$ is odd,  either $A$ is the adjacency matrix of an induced subgraph of a doubly regular tournament by deleting a vertex or  the Seidel matrix $S$ satisfies that $S^2$ is permutaionally similar to \eqref{eq:seidel} as desired.
\end{proof}
\begin{remark} \label{rem:D}
Chadjipantelis and Kounias \cite[Theorem]{CK} showed that supplementary difference sets construct $(\pm1)$-matrix $S$ satisfying \eqref{eq:seidel}. 

For the Seidel matrix $S$ satisfying (\ref{eq:seidel}) with $(k,l)=(n-3,2)$, $\sqrt{-1}S+I$ is known as the $D$-optimal designs \cite{E60,W}. 
Let $A_1,A_2$ be the adjacency matrices of doubly regular tournaments of same order. 
Then a tournament of the adjacency matrix 
\begin{align*}
\begin{pmatrix}
A_1&J\\0&A_2
\end{pmatrix}
\end{align*}
satisfies (\ref{eq:seidel}) for $(k,l)=(d,d-1)$.
For $d=2$, this example corresponds to a skew $D$-optimal design.  
\end{remark}

When $d$ is odd, the number of tight $2$-codes in $\Omega(d)$
is equal to that of doubly regular tournaments of order $2d+1$. 
When $d$ is even, the number of tight $2$-codes 
in $\Omega(d)$ is that of tournaments in the switching classe of the tournament obtained by adding one vertex with no outward edges and all possible inward edges to a doubly regular tournament. 
If we use a computer, the number of non-isomorphic tournaments in a switching class can be calculated by Theorem~3.2 in \cite{BC00}. 
Therefore if doubly regular tournaments are classified, then 
we can determine the number of tight $2$-codes. 
Doubly regular tournaments have been classified for order at most 
$27$ \cite{S95}, and we can find the catalogue in \cite{Mweb}. 
Note that non-isomorphic doubly regular tournaments may be in the same switching
class. 
By using a computer calculation based on Theorem~3.2 in \cite{BC00}, we can give the number of tight $2$-codes as Table 1.  
\begin{center}
	{\small $
\begin{array}{c|cccccccccccccc}
	d   & 1 & 2 & 3 &4&5&6&7&8&9&10&11&12&13&14  \\ \hline 
 |X| & 3 & 4 & 7 &8& 11&12&15&16&19&20&23&24&27&28   \\
	\# & 1 & 2 & 1 &4& 1&8&
 2& 240&2& 8956&37& 11339044& 722&9897616700
\end{array}
	$}
	
Table 1: Tight complex $2$-code $X$ in $\Omega(d)$
\end{center}

\bigskip

\noindent
\textbf{Acknowledgments.} The authors would like to thank the anonymous referees for the useful comments. 
Hiroshi Nozaki is supported by JSPS KAKENHI Grant Numbers 25800011, 26400003. 
Sho Suda is supported by JSPS KAKENHI Grant Numbers 15K21075, 26400003.


\begin{thebibliography}{9}
\bibitem{BB}
E. Bannai, E. Bannai, 
A survey on spherical designs and algebraic combinatorics on spheres, 
{\it European J. Combin.} {\bf 30} (2009), no.\ 6, 1392--1425. 

\bibitem{BBS}
E. Bannai, E. Bannai, D. Stanton, 
An upper bound for the cardinality of an $s$-distance subset in real Euclidean space, II, 
\textit{Combinatorica} {\bf 3} (1983), 147--152.

\bibitem{BC00}
L. Babai, P. J. Cameron, 
Automorphisms and enumeration of switching classes of tournaments,
{\it Electron. J.\ Combin.} {\bf 7} (2000), Research Paper 38, 25 pages.




	
\bibitem{BD79}
E.\ Bannai, R. M.\ Damerell, Tight spherical designs. I, {\it J. Math.\ Soc.\ Japan} {\bf 31} (1979), no.\ 1, 199--207.

\bibitem{AW13}
A. Barg, W-H. Yu,  
New bounds for spherical two-distance sets,
{\it Exp.\ Math.} {\bf 22} (2013), no.\ 2, 187--194.


\bibitem{CK}
Th. Chadjipantelis,
S. Kounias, 
Supplementary difference sets and $D$-optimal designs for $n\equiv2\pmod{4}$, 
{\it Discrete math.} {\bf 57} (1985), 
211--216.


\bibitem{DGS77}
P. Delsarte, J. M. Goethals, 
J. J. Seidel, 
Spherical codes and designs, {\it Geom.\ Dedicata} {\bf 6} (1977), no.\ 3, 363--388. 


\bibitem{E60}
H. Ehlich, 
Determinantenabsch\"atzungen f\"ur bin\"are Matrizen, 
{\it Math.\ Z.} {\bf 83} (1964), 123--132.


\bibitem{ES66}	
S. J. Einhorn, I. J. Schoenberg,
On euclidean sets having only two distances between points. I. II. 
{\it Nederl.\ Akad.\ Wetensch.\ Proc.\ Ser.\ A} {\bf 69}={\it Indag.\ Math.} {\bf 28} (1966), 
479--488, 489--504.


\bibitem{EF96}
 P. Erd$\rm \ddot{o}$s, P. Fishburn, Maximum planar sets that determine $k$ distances, 
{\it Discrete Math.} {\bf 160} (1996), 115--125.


\bibitem{L97}
P. Lison\v{e}k, 
New maximal two-distance sets, 
\textit{J. Combin.\ Theory, Ser.\ A} {\bf 77} (1997), 318--338.


\bibitem{Mweb}
B. McKay, ``http://cs.anu.edu.au/\~{}bdm/data/digraphs.html'', catalogues of directed graphs. 

\bibitem{M09}
O.R. Musin,  
Spherical two-distance sets, 
{\it J. Combin.\ Theory Ser.\ A} 
{\bf 116} (2009), no.\ 4, 988--995.

\bibitem{MN11}
O.R. Musin, H. Nozaki, 
Bounds on three- and higher-distance sets, 
{\it European J. Combin.} {\bf 32} (2011), no.\ 8, 1182--1190. 

\bibitem{NSudapre}
H. Nozaki, S. Suda,
A characterization of skew Hadamard matrices and doubly regular tournaments, 
{\it Linear Algebra and Appl.} {\bf 437} (2012), 
no.\ 3, 1050--1056.


\bibitem{RB72}
K. B. Reid, E. Brown, Doubly regular tournaments are equivalent to skew Hadamard matrices, 
{\it J. Combin.\ Theory, Ser.\ A} {\bf 12} (1972), 332--338.


\bibitem{RSX}
A. Roy, S. Suda, 
Complex spherical designs and codes, 
{\it J. Combin.\ Des.} {\bf 22} (2014), 105--148.
 

\bibitem{R10}
A. Roy, 
Minimal Euclidean representation of graphs,
{\it Discrete math.} {\bf 310} (2010), 727--733. 


\bibitem{S04}
 M. Shinohara, 
Classification of three-distance sets in two dimensional Euclidean space, 
{\it European J.\ Combin.} {\bf 25} (2004),
1039--1058.

\bibitem{S08}
 M. Shinohara, 
Uniqueness of maximum planar five-distance sets, 
{\it Discrete Math.} {\bf 308} (2008), 3048--3055.

\bibitem{Spre}
M. Shinohara, 
Uniqueness of maximum three-distance sets in the three-dimensional Euclidean space, 
arXiv:1309.2047. 


\bibitem{S95}
E. Spence,
Classification of Hadamard matrices of order $24$ and $28$,
{\it Discrete math.} {\bf 140} (1995), 185--243. 


\bibitem{W}
M. Wojtas, 
On Hadamard's inequality for the determinants of order non-divisible by $4$,  
{\it Colloq.\ Math.} {\bf 12} (1964), 73--83. 

\end{thebibliography}
\end{document}